\documentclass[11pt]{article}
\usepackage{amsmath, amssymb, amsthm}

\date{}
\title{\vspace{-0.8cm}Rainbow Tur\'{a}n Problem for Even Cycles}
\author{
Shagnik Das \thanks{Department of Mathematics, UCLA, Los Angeles, CA, 90095. Email: shagnik@ucla.edu.}
\and
Choongbum Lee \thanks{Department of Mathematics, UCLA, Los
Angeles, CA, 90095. Email: choongbum.lee@gmail.com. Research
supported in part by a Samsung Scholarship.}
\and
Benny Sudakov \thanks{Department of Mathematics, UCLA, Los Angeles, CA 90095.
Email: bsudakov@math.ucla.edu. Research supported in part by NSF
CAREER award DMS-0812005 and by a USA-Israel BSF grant. }
}

\oddsidemargin  0pt
\evensidemargin 0pt
\marginparwidth 40pt
\marginparsep 10pt
\topmargin 10pt
\headsep 10pt
\textheight 8.6in
\textwidth 6.6in

\theoremstyle{plain}
\newtheorem{THM}{Theorem}[section]
\newtheorem{PROP}[THM]{Proposition}
\newtheorem{LEMMA}[THM]{Lemma}

\newtheorem{COR}[THM]{Corollary}

\theoremstyle{definition}

\newcommand{\BFP}{\mathbf{P}}
\newcommand{\BBE}{\mathbb{E}}

\begin{document}
\maketitle

\begin{abstract}
An edge-colored graph is rainbow if all its edges are
colored with distinct colors.
For a fixed graph $H$, the rainbow Tur\'{a}n number
$\mathrm{ex}^{\ast}(n,H)$ is defined as the maximum number of edges
in a properly edge-colored graph on $n$ vertices with no rainbow copy of $H$.  We study the
rainbow Tur\'{a}n number of even cycles, and prove that for every fixed $\varepsilon > 0$, there is a constant $C(\varepsilon)$ such that every
properly edge-colored graph on $n$ vertices with at least
$C(\varepsilon) n^{1 + \varepsilon}$ edges contains a rainbow cycle of
even length at most $2 \left \lceil \frac{\ln 4 - \ln \varepsilon}{\ln ( 1 + \varepsilon )} \right \rceil$.
This partially answers a question of Keevash, Mubayi, Sudakov, and
Verstra\"ete, who asked how dense a graph can be without having a rainbow cycle of any length.
\end{abstract}

\section{Introduction}

An edge-colored graph is \emph{rainbow} if all its edges have distinct colors.  The rainbow Tur\'{a}n problem, first introduced by Keevash, Mubayi, Sudakov and
Verstra\"{e}te \cite{KMSV}, asks the following question: given a fixed graph $H$, what is the maximum
number of edges in a properly edge-colored graph $G$ on $n$ vertices
with no rainbow copy of $H$?  This maximum is denoted
$\mathrm{ex}^{\ast}(n,H)$, and is called the rainbow Tur\'{a}n
number of $H$. In this paper, we study the rainbow Tur\'{a}n problem for even
cycles.

\subsection{Background}

The rainbow Tur\'{a}n problem has a certain aesthetic appeal, as it
lies at the intersection of two key areas of extremal graph theory.
On the one hand we have the classical Tur\'{a}n problem, which, for
a given graph $H$, asks for the maximum number of edges in an
$H$-free graph on $n$ vertices.  This maximum, the Tur\'{a}n number
of $H$, is denoted by $\mathrm{ex}(n,H)$, and determining it is one
of the oldest problems in extremal combinatorics.  Tur\'{a}n
\cite{Tur} solved the problem for cliques by finding
$\mathrm{ex}(n,K_k)$. Erd\H{o}s and Stone \cite{ES} then found the
asymptotics of $\mathrm{ex}(n,H)$ for all non-bipartite graphs $H$.
The problem of determining the Tur\'{a}n numbers of bipartite graphs
is still largely open.  Of particular interest is the case of even
cycles. Erd\H{o}s conjectured that $\mathrm{ex}(n,C_{2k}) = \Theta(
n^{1 + \frac{1}{k}} )$.  Bondy and Simonovits \cite{BS} gave the
corresponding upper bound, but as of yet a matching lower bound is
only known for $k = 2, 3,$ or $5$.

On the other hand, there is a great deal of literature on extremal
problems regarding (not necessarily proper) edge-colored graphs. The
Canonical Ramsey Theorem of Erd\H{o}s and Rado \cite{ER} shows, as a
special case, that when $n$ is large with respect to $t$, then any
proper edge-coloring of $K_n$ contains a rainbow $K_t$. Another
variation is when one allows at most $k$ colors to be used for edges
incident to each vertex. This notion, called local $k$-colorings,
has been first introduced by Gy\'arf\'as, Lehel, Schelp, and Tuza
\cite{GLST}, and has been studied in a series of works. More
recently, Alon, Jiang, Miller and Pritikin \cite{AJMP} studied the
problem of finding a rainbow copy of a graph $H$ in an edge-coloring
of $K_n$ where each color appears at most $m$ times at any vertex.
The rainbow Tur\'{a}n problem is a Tur\'{a}n-type extension in the
case $m = 1$.  From this point on, we shall only consider proper
edge-colorings.

The rainbow Tur\'{a}n problem for even cycles is of particular
interest because of the following connection to a problem in number
theory, as noted in \cite{KMSV}.  Given an abelian group $\Gamma$, a
subset $A$ is called a $B_k^{\ast}$-set if it does not contain
disjoint $k$-sets $B,C$ with the same sum.  Given a set $A$, we form
a bipartite graph $G$ as follows: the two parts $X$ and $Y$ are
copies of $\Gamma$, and we have an edge from $x \in X$ to $y \in Y$
if and only if $x - y \in A$. Moreover, the edge $xy$ is given the
color $x - y \in A$.  It is easy to see that this is a proper
edge-coloring of a graph with $|\Gamma| |A|$ edges, and $A$ is a
$B_k^{\ast}$-set precisely when $G$ has no rainbow $C_{2k}$.  Hence
bounds on $B_k^{\ast}$-sets give bounds on
$\mathrm{ex}^{\ast}(n,C_{2k})$, and vice versa.

\subsection{Known Results}

Note that we trivially have the lower bound $\mathrm{ex}(n,H) \le
\mathrm{ex}^{\ast}(n,H)$, since if a graph is $H$-free, then it is rainbow-$H$-free under any proper edge coloring.
One is thus generally interested in either
finding a matching upper bound, or showing that $\mathrm{ex}^{\ast}(n,H)$ is asymptotically larger than $\mathrm{ex}(n,H)$.  In the original paper of Keevash,
Sudakov, Mubayi and Verstra\"{e}te \cite{KMSV}, this problem was
resolved for a wide range of graphs.  In particular, it was shown
that for non-bipartite $H$, the Rainbow Tur\'{a}n problem can be
reduced to the Tur\'{a}n problem, and as a result
$\mathrm{ex}^{\ast}(n,H)$ is asymptotically (and in some cases
exactly) equal to $\mathrm{ex}(n,H)$.  For bipartite $H$ with a
maximum degree of $s$ in one of the parts, they found an upper bound
of $\mathrm{ex}^{\ast}(n,H) = O(n^{2 - \frac{1}{s}})$.  This matches
the general upper bound for Tur\'{a}n numbers of such graphs, and in
particular is tight for $C_4$ (where $s = 2$).

An interesting case which is not implied by the above mentioned
results is the case of even cycles of length at least $6$, and
special attention was paid to this case, in light of the connection
to $B_k^{\ast}$-sets discussed earlier.  Using Bose and Chawla's
\cite{BC} construction of large $B_k^{\ast}$-sets, the authors gave
a lower bound of $\mathrm{ex}^{\ast}(n,C_{2k}) = \Omega (n^{1 +
\frac{1}{k}})$ - this is better than the best known bound for
$\mathrm{ex}(n,C_{2k})$ for general $k$.  A matching upper bound was
obtained in the case of the six-cycle $C_6$, so it is known that
$\mathrm{ex}^{\ast}(n,C_6) = \Theta (n^{1 + \frac{1}{3}})$. However,
surprisingly, $\mathrm{ex}^{\ast}(n,C_6)$ is asymptotically larger
than $\mathrm{ex}(n,C_6)$.

Another problem considered was that of rainbow acyclicity - what is
the maximum number of edges in an edge-colored graph on $n$ vertices
with no rainbow cycle of any length?  Let $f(n)$ denote this
maximum.  In the uncolored setting, the answer is given by a tree,
which has $n-1$ edges.  However, as described in \cite{KMSV},
coloring the $d$-dimensional hypercube with $d$ colors, where
parallel edges get the same color, gives a rainbow acyclic proper
edge-coloring, and hence $f(n) = \Omega( n \ln n )$. The best known
upper bound to date was $f(n) = O(n^{1+\frac{1}{3}})$, which follows
from the bound $\mathrm{ex}^{\ast}(n,C_6) = \Theta (n^{1 +
\frac{1}{3}})$.

Keevash, Mubayi, Sudakov, and Verstra\"ete listed these two
questions, determining $\mathrm{ex}(n,C_{2k})$ and $f(n)$, as
interesting open problems in the study of rainbow Tur\'an numbers.

\subsection{Our Results}

In this paper we improve the upper bound on the rainbow Tur\'{a}n
number of even cycles, and make progress towards the two open problems
mentioned in the previous subsection. Following is the main
theorem of this paper:

\begin{THM} \label{thm:main}
For every fixed $\varepsilon > 0$ there is a constant
$C(\varepsilon)$ such that any properly edge-colored graph on $n$
vertices with at least $C(\varepsilon) n^{1 + \varepsilon}$ edges
contains a rainbow copy of an even cycle of length at most $2k$,
where $k = \left \lceil \frac{\ln 4 - \ln \varepsilon} { \ln (1 +
\varepsilon) } \right \rceil $.
\end{THM}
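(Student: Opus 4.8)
The plan is to argue by contradiction: suppose $G$ is properly edge-coloured on $n$ vertices, has at least $C(\varepsilon)n^{1+\varepsilon}$ edges, and contains no rainbow even cycle of length at most $2k$, and derive a contradiction once $C(\varepsilon)$ and $n$ are large. First I would clean up: repeatedly deleting vertices of degree below $\tfrac12 C(\varepsilon)n^{\varepsilon}$ discards fewer than half the edges, yielding a non-empty properly-coloured subgraph $G'$ on $N\le n$ vertices with minimum degree $\delta\ge d:=\tfrac12 C(\varepsilon)n^{\varepsilon}\ge\tfrac12 C(\varepsilon)N^{\varepsilon}$, still containing no rainbow even cycle of length at most $2k$.

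Next, fix $v\in V(G')$ and grow a rainbow breadth-first tree $T$ rooted at $v$: having built levels $L_0=\{v\},\dots,L_i$ together with, for every placed vertex $u$, the rainbow tree path $P_u$ from $v$ to $u$, form $L_{i+1}$ by scanning the vertices $u\in L_i$ and attaching to $u$ every not-yet-placed neighbour $w$ with $c(uw)\notin c(P_u)$. A rainbow path of length $j$ uses $j$ colours and passes through $j+1$ vertices, so each $u\in L_i$ has at least $\delta-2i$ eligible edges and the construction does not stall; write $B_i=L_0\cup\dots\cup L_i$.

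The core of the argument is a level-expansion claim: for each $0\le i\le k-1$, either $G'$ already contains a rainbow even cycle of length at most $2k$, or $|B_{i+1}|$ is polynomially larger than $|B_i|$ — in the form $|B_{i+1}|\ge|B_i|^{1+\varepsilon}$, up to lower-order factors. The mechanism follows the Bondy--Simonovits template for even cycles: if $B_{i+1}$ fails to expand this much, then among the $\ge(\delta-2i)|L_i|$ colour-fresh edges leaving $L_i$ too many return to $L_{i-1}\cup L_i$ or land on few vertices of $L_{i+1}$, and pigeonholing produces one of the standard even-cycle-forcing configurations — a vertex of $L_i$ with two eligible back-edges to $L_{i-1}$, a vertex of $L_{i+1}$ hit by two eligible edges from $L_i$, or two adjacent vertices of $L_i$ — which, together with the corresponding tree paths, closes into an even cycle of length at most $2i+2\le 2k$. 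Iterating from $|B_1|=1+\deg_{G'}(v)\ge d$ gives $|B_k|\ge d^{(1+\varepsilon)^{k-1}}>n$, since $(1+\varepsilon)^k\ge 4/\varepsilon$ makes the exponent $\varepsilon(1+\varepsilon)^{k-1}$ exceed $1$ with room to spare (the factor $4$ absorbing the constant and lower-order losses), contradicting $|B_k|\le N\le n$; this is precisely why $k$ is taken to be $\lceil(\ln 4-\ln\varepsilon)/\ln(1+\varepsilon)\rceil$.

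The main obstacle is ensuring that the cycle produced in the expansion step is \emph{rainbow}, not merely even. The two tree paths making up such a cycle agree up to their meeting vertex $z$ and then diverge, so the cycle is rainbow only if the two branches below $z$, together with the one or two closing edges, carry pairwise distinct colours — and the naive BFS above gives no control over colour collisions between different branches of $T$ (two vertices sharing many common neighbours, or one vertex receiving many fresh edges from a lower level, need not create any rainbow short cycle when the colouring has the right coincidences). Overcoming this is the technical heart: one must refine the construction so that each newly attached edge also avoids the colours used on the relevant nearby portions of $T$ — while checking that enough eligible edges survive for the expansion estimate to go through — or maintain an auxiliary branch-wise colour-disjointness invariant, or argue that when the natural cycle is not rainbow a different short rainbow even cycle can be exhibited. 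It is exactly this colour cost that weakens the per-step growth from the factor $\delta$ available in the uncoloured Bondy--Simonovits argument down to a small power of $\delta$, thereby inflating $k$ from roughly $1/\varepsilon$ to roughly $\ln(4/\varepsilon)/\ln(1+\varepsilon)$.
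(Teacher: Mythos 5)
You correctly identify the overall skeleton — restrict to a min-degree subgraph, grow a BFS-type tree of rainbow paths from a root, show exponential level expansion, and contradict $|B_k|\le n$ — and you also correctly isolate the central obstacle: the BFS invariant ``each root-to-vertex path is rainbow'' does not make the closing cycle rainbow, because the two branches from the meeting vertex and the closing edges may share colours. But you then leave that obstacle unresolved, merely listing candidate strategies (``refine the construction \dots, or maintain an auxiliary colour-disjointness invariant, or argue that \dots''). That is precisely the part that carries the entire weight of the theorem, and without it the argument does not go through. The Bondy--Simonovits template you invoke exploits the \emph{impossibility} of two back-edges or a level-internal edge; here such configurations are perfectly possible in a properly coloured graph, so ``pigeonholing produces one of the standard even-cycle-forcing configurations'' does not yield any cycle you can certify as rainbow.

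The paper's resolution of this obstacle rests on three devices that are absent from your sketch, and I don't see how to recover the expansion claim without something equivalent. First, it partitions the colour set into $k$ classes $\mathcal C_1,\dots,\mathcal C_k$ so that every vertex has many edges of each class (Lemma~\ref{lem:split}), and builds $L_{i+1}$ using only $\mathcal C_{i+1}$-edges; this guarantees the new edge's colour is disjoint from the colours on \emph{every} earlier tree path, not just the one path to the root. Second, it tracks, for each colour $c$ and level $i$, the set $X_{i,c}$ of vertices in $L_i$ whose tree path uses $c$. The observation that a vertex $y\in L_{i+1}$ with two neighbours in $L_i$ having colour-disjoint paths produces a rainbow $C_{2(i+1)}$ forces $y$ to concentrate its $L_i$-neighbours inside $\cup_j X_{i,c_j}$ for the $i$ colours on one fixed path; this is what converts ``no short rainbow cycle'' into a combinatorial density statement that can be double-counted. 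Third — and this is what ultimately makes the counting close — the paper runs an induction on $n$ and uses the inductive hypothesis as a \emph{self-improving density bound}: any vertex subset inducing more than $C|S|^{1+\varepsilon}$ edges already contains the desired cycle, so all small subgraphs (in particular $L_i\cup L_{i+1}$ and $W_c\cup X_{i,c}$) are sparse. Your sketch never invokes such an inductive density bound, and without it the pigeonhole counting cannot force expansion.

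Two further points on your quantitative claims. Your stated recurrence $|B_{i+1}|\ge|B_i|^{1+\varepsilon}$ is not what the paper proves; the paper's Proposition~\ref{prop:key} gives a contraction of the \emph{gap} $\bigl(1+\tfrac{\varepsilon}{2}\bigr)-\alpha_i$ by a factor $(1+\varepsilon)^{-1}$, which tempers growth as $\alpha_i\to 1$ and is exactly what the $X_{i,c}$ bookkeeping supports; your stronger-looking recurrence is neither needed nor established by the mechanism you describe. Also, your BFS guarantees at least $\delta-i$ eligible edges per vertex (proper colouring means at most one incident edge per forbidden colour), not $\delta-2i$; this is minor, but it illustrates that the per-step accounting requires care. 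In short, the proposal has the right shape and correctly diagnoses the difficulty, but the proof of the expansion step — the heart of the argument — is missing.
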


Our result easily gives an upper bound on the size of
rainbow acyclic graphs.\footnotemark

\footnotetext{As we remark in the concluding section, one can do somewhat better than this corollary.}

\begin{COR} \label{cor:acyclic}
Let $f(n)$ denote the size of the largest properly edge-colored
graph on $n$ vertices that contains no rainbow cycle.  Then for any
fixed $\varepsilon > 0$ and sufficiently large $n$, we have $f(n) <
n^{1 + \varepsilon}$.
\end{COR}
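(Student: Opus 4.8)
The plan is to derive the corollary directly from Theorem~\ref{thm:main} by a monotonicity observation: a properly edge-colored graph with no rainbow cycle of \emph{any} length has, in particular, no rainbow \emph{even} cycle of any length, so Theorem~\ref{thm:main} prevents it from being too dense. The only real content is a quantifier adjustment to convert the multiplicative constant $C(\cdot)$ into the desired exponent.

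Concretely, I would fix $\varepsilon > 0$ and apply Theorem~\ref{thm:main} with the smaller parameter $\varepsilon' = \varepsilon/2$. This produces a constant $C(\varepsilon/2)$ such that every properly edge-colored graph on $n$ vertices with at least $C(\varepsilon/2)\, n^{1 + \varepsilon/2}$ edges contains a rainbow even cycle (of length at most $2\lceil (\ln 4 - \ln(\varepsilon/2))/\ln(1+\varepsilon/2)\rceil$, though the precise length is irrelevant here). Since a rainbow even cycle is in particular a rainbow cycle, any properly edge-colored graph on $n$ vertices with no rainbow cycle of any length must have strictly fewer than $C(\varepsilon/2)\, n^{1+\varepsilon/2}$ edges; that is, $f(n) < C(\varepsilon/2)\, n^{1+\varepsilon/2}$ for all $n$.

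Finally I would absorb the constant into the exponent. Since $n^{1+\varepsilon}/n^{1+\varepsilon/2} = n^{\varepsilon/2} \to \infty$ as $n \to \infty$, there is an $n_0 = n_0(\varepsilon)$ such that $n^{\varepsilon/2} > C(\varepsilon/2)$, and hence $C(\varepsilon/2)\, n^{1+\varepsilon/2} \le n^{1+\varepsilon}$, for all $n \ge n_0$. Combining this with the previous paragraph gives $f(n) < n^{1+\varepsilon}$ for all sufficiently large $n$, as claimed.

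I do not expect any genuine obstacle: the one subtlety is that the theorem must be invoked with a parameter strictly smaller than $\varepsilon$, so that the polynomial slack $n^{\varepsilon/2}$ can dominate the (otherwise uncontrolled) constant $C(\varepsilon/2)$. The sharper bound alluded to in the footnote would instead come from revisiting the argument behind Theorem~\ref{thm:main} and not from this short deduction.
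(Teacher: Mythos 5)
Your proposal is correct and is exactly the deduction the paper intends when it says Corollary \ref{cor:acyclic} follows easily from Theorem \ref{thm:main}: apply the theorem with a smaller parameter (here $\varepsilon/2$) and absorb the resulting constant $C(\varepsilon/2)$ into the polynomial slack $n^{\varepsilon/2}$ for $n$ sufficiently large. No genuinely different route is taken and nothing further is needed.
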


With a little more work, we can show that a graph satisfying the
condition of Theorem \ref{thm:main} must contain a rainbow cycle of
length exactly $2k$.
Therefore, inverting the relationship between $k$ and $\varepsilon$
gives a bound on $\mathrm{ex}^{\ast}(n,C_{2k})$.

\begin{COR} \label{cor:rtc2k}
For every fixed integer $k \ge 2$, $\mathrm{ex}^{\ast}(n,C_{2k}) =
O\left( n^{1 + \frac{(1 + \varepsilon_k) \ln k}{k}} \right)$, where
$\varepsilon_k \rightarrow 0$ as $k \rightarrow \infty$.
\end{COR}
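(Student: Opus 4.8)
The plan is to invert the relationship between $k$ and $\varepsilon$ appearing in Theorem~\ref{thm:main}. For this I would use the strengthened statement noted just before the corollary: if $G$ is a properly edge-colored graph on $n$ vertices with at least $C(\varepsilon)\,n^{1+\varepsilon}$ edges, then $G$ contains a rainbow cycle of length \emph{exactly} $2k$, where $k=\lceil\frac{\ln 4-\ln\varepsilon}{\ln(1+\varepsilon)}\rceil$. Equivalently, $\mathrm{ex}^{\ast}(n,C_{2k})<C(\varepsilon)\,n^{1+\varepsilon}$ whenever $\varepsilon$ and $k$ are linked by this ceiling identity. So the remaining task is: for each fixed integer $k\ge 2$, exhibit a value $\varepsilon=\varepsilon(k)$ with $\lceil\frac{\ln 4-\ln\varepsilon}{\ln(1+\varepsilon)}\rceil=k$, and then estimate $\varepsilon(k)$ as $k\to\infty$.

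For the first part I would study $g(\varepsilon):=\frac{\ln 4-\ln\varepsilon}{\ln(1+\varepsilon)}$ on $(0,4)$. On this interval the numerator is positive and strictly decreasing and the denominator is positive and strictly increasing, so $g$ is a continuous strictly decreasing bijection from $(0,4)$ onto $(0,\infty)$, with $g(\varepsilon)\to\infty$ as $\varepsilon\to 0^{+}$. Hence for every integer $k\ge 1$ there is a unique $\delta_k\in(0,4)$ with $g(\delta_k)=k$, so that $\lceil g(\delta_k)\rceil=k$. Applying the strengthened Theorem~\ref{thm:main} with $\varepsilon=\delta_k$ gives $\mathrm{ex}^{\ast}(n,C_{2k})<C(\delta_k)\,n^{1+\delta_k}$, and since $k$ is fixed $C(\delta_k)$ is a constant, so $\mathrm{ex}^{\ast}(n,C_{2k})=O(n^{1+\delta_k})$.

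It then remains to pin down the asymptotics of $\delta_k$. From $g(\delta_k)=k\to\infty$ we get $\delta_k\to 0$, so $\ln(1+\delta_k)=\delta_k(1+o(1))$, and the defining equation $\ln 4-\ln\delta_k=k\ln(1+\delta_k)$ becomes, writing $\beta_k:=k\delta_k$, the relation $\ln(4k/\beta_k)=\beta_k(1+o(1))$. A short bootstrap then does it: first $\beta_k\to\infty$; hence $\beta_k\le(1+o(1))\ln k$; hence $\ln\beta_k=o(\ln k)$; hence $\beta_k=(1+o(1))\ln k$. Setting $\varepsilon_k:=\frac{k\delta_k}{\ln k}-1=\frac{\beta_k}{\ln k}-1$, we conclude $\delta_k=\frac{(1+\varepsilon_k)\ln k}{k}$ with $\varepsilon_k\to 0$, and substituting this into the bound of the previous paragraph gives $\mathrm{ex}^{\ast}(n,C_{2k})=O\left(n^{1+\frac{(1+\varepsilon_k)\ln k}{k}}\right)$, as desired.

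The real content is not this inversion — which is elementary calculus and asymptotics — but the strengthened form of Theorem~\ref{thm:main}, upgrading ``rainbow even cycle of length at most $2k$'' to ``rainbow cycle of length exactly $2k$''; that is the only genuine obstacle, and is the ``little more work'' alluded to before the statement. Once it is in place, the above is routine bookkeeping.
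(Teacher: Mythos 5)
Your proof is correct and carries out exactly the inversion that the paper gestures at (the paper gives no written proof of this corollary beyond the remark that one should ``invert the relationship between $k$ and $\varepsilon$''). You correctly identify that the strengthened form of Theorem~\ref{thm:main} (exact length $2k$, sketched in the concluding remarks) is the genuine prerequisite, and the monotonicity argument for $g$, the choice $\delta_k$ with $\lceil g(\delta_k)\rceil = k$, and the bootstrap $\beta_k\to\infty \Rightarrow \beta_k \le (1+o(1))\ln k \Rightarrow \ln\beta_k = o(\ln k) \Rightarrow \beta_k = (1+o(1))\ln k$ are all sound, matching the intended derivation.
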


\subsection{Outline and Notation}

This paper is organized as follows.  Section 2 provides a couple of
quick probabilistic lemmas.  The proof of Theorem \ref{thm:main} is
then given in Section 3, although the proof of the key proposition
is deferred until Section 4.  The final section contains some
concluding remarks and open problems.

A graph $G$ is given by a pair of vertex set $V(G)$ and edge set $E(G)$.
For a vertex $v \in V(G)$, we use $d(v)$ to denote its degree, and
for a subset of vertices $X$, we let $d(v,X)$ be the number of neighbors
of $v$ in the set $X$. We use the notation $ \mathrm{Bin}(n,p)$ to denote a
binomial random variable with parameters $n$ and $p$. Throughout the
paper $\log$ is used for the logarithm function of base 2, and $\ln$
is used for the natural logarithm.

\section{Preliminary Lemmas}

In this section we will prove a couple of technical lemmas that will be used in our proof of Theorem \ref{thm:main}.  Both will be proven using the probabilistic method, and will rely on the following form of Hoeffding's Inequality as appears in \cite[Theorem 2.3]{Mcd}.

\begin{THM} \label{thm:conc}
Let the random variables $X_1, X_2, \hdots, X_k$ be independent, with $0 \le X_i \le 1$ for each $i$.  Let $S = \sum_{i=1}^k X_i$, and $\mu = \BBE [ S ]$.  Then for any $s \le \frac{1}{2} \mu$ and $t \ge 2 \mu$, we have
\[ \BFP ( S \le s ) \le \mathrm{exp} \left( - \frac{s}{4} \right) \qquad \textrm{and} \qquad \BFP ( S \ge t ) \le \mathrm{exp} \left( - \frac{3t}{16} \right) . \]
\end{THM}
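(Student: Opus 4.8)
The plan is to prove Theorem~\ref{thm:conc} by reduction to the standard additive Chernoff--Hoeffding bounds and then simplifying the exponents to the stated clean forms. The statement is just a convenient packaging of well-known concentration, so the work is entirely in the constants.

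First I would recall the basic bounds. For independent $X_1,\dots,X_k$ with $0 \le X_i \le 1$, $S = \sum X_i$ and $\mu = \BBE[S]$, the Chernoff bounds state that for any $\delta \in (0,1)$,
\[
\BFP(S \le (1-\delta)\mu) \le \exp\!\left(-\frac{\delta^2 \mu}{2}\right),
\qquad
\BFP(S \ge (1+\delta)\mu) \le \exp\!\left(-\frac{\delta^2 \mu}{2+\delta}\right),
\]
and for the upper tail one also has, for all $\lambda \ge 0$, $\BFP(S \ge (1+\lambda)\mu) \le \left(e^\lambda/(1+\lambda)^{1+\lambda}\right)^\mu$, which handles $\lambda \ge 1$. (In \cite{Mcd} these are exactly Theorem~2.3; I would cite that and only verify the specialization.)

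For the lower tail: given $s \le \tfrac12\mu$, write $s = (1-\delta)\mu$ with $\delta \ge \tfrac12$. Then $\BFP(S \le s) \le \exp(-\delta^2\mu/2) \le \exp(-\mu/8)$. Since $\delta \ge \tfrac12$ gives $s = (1-\delta)\mu \le \tfrac12\mu$, we have $\mu \ge 2s$, so $\exp(-\mu/8) \le \exp(-s/4)$, which is the claimed bound. For the upper tail: given $t \ge 2\mu$, write $t = (1+\lambda)\mu$ with $\lambda \ge 1$. Using $\BFP(S \ge t) \le (e^\lambda/(1+\lambda)^{1+\lambda})^\mu = \exp\!\left(\mu(\lambda - (1+\lambda)\ln(1+\lambda))\right)$, I would check that for $\lambda \ge 1$ the function $g(\lambda) = \lambda - (1+\lambda)\ln(1+\lambda)$ satisfies $g(\lambda) \le -\tfrac38(1+\lambda)$, i.e. $\tfrac38(1+\lambda) + \lambda \le (1+\lambda)\ln(1+\lambda)$; at $\lambda = 1$ this reads $\tfrac34 + 1 = 1.75 \le 2\ln 2 \approx 1.386$—which is false, so a cruder route is needed. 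Instead I would simply invoke the bound in the form $\BFP(S \ge t) \le \exp(-t/4)$ valid for $t \ge e^2\mu$ is still too strong; the honest route is: the exponent is $\mu\, g(\lambda)$ and $t = (1+\lambda)\mu$, so it suffices that $g(\lambda)/(1+\lambda) \le -\tfrac{3}{16}$ for all $\lambda \ge 1$, i.e. $h(\lambda) := \ln(1+\lambda) - \tfrac{\lambda}{1+\lambda} \ge \tfrac{3}{16}$; since $h$ is increasing and $h(1) = \ln 2 - \tfrac12 \approx 0.193 \ge 0.1875 = \tfrac3{16}$, this holds, giving $\BFP(S \ge t) \le \exp(\mu g(\lambda)) \le \exp(-\tfrac{3}{16}(1+\lambda)\mu) = \exp(-\tfrac{3t}{16})$.

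The only mild obstacle is bookkeeping with the tail-function estimates: one must be careful that $h(\lambda) = \ln(1+\lambda) - \lambda/(1+\lambda)$ is monotone increasing on $[1,\infty)$ (its derivative is $\lambda/(1+\lambda)^2 \ge 0$) so its minimum over the relevant range is at $\lambda = 1$, and that the numerical inequality $\ln 2 - \tfrac12 \ge \tfrac{3}{16}$ indeed holds (since $\ln 2 > 0.693$ and $\tfrac12 + \tfrac{3}{16} = 0.6875$). Everything else is a direct substitution into \cite[Theorem~2.3]{Mcd}, so the proof is short.
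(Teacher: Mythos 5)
Your calculations are correct, but note that the paper does not actually prove this statement: it is quoted as \cite[Theorem~2.3]{Mcd} and used as a black box, so there is no internal proof to compare against. What you have done is reconstruct the routine specialization of the general multiplicative Chernoff--Hoeffding inequalities to the clean exponents $s/4$ and $3t/16$, and both steps check out: for the lower tail, $\delta \ge \tfrac12$ together with $\mu \ge 2s$ gives $\delta^2\mu/2 \ge \mu/8 \ge s/4$; for the upper tail, $h(\lambda) = \ln(1+\lambda) - \lambda/(1+\lambda)$ has $h'(\lambda) = \lambda/(1+\lambda)^2 \ge 0$, so its minimum over $\lambda \ge 1$ is $h(1) = \ln 2 - \tfrac12 \approx 0.193 > \tfrac{3}{16}$, which yields $\mu\,g(\lambda) \le -\tfrac{3}{16}(1+\lambda)\mu = -\tfrac{3t}{16}$. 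In a polished write-up you should delete the detour where you first test and reject the target $g(\lambda) \le -\tfrac{3}{8}(1+\lambda)$; starting directly from the $3/16$ target makes the argument clean and short.
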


Our first lemma asserts that for any edge-colored graph with large minimum degree, the colors of the graph can be partitioned into disjoint classes in such a way that for every color class, the edges using colors from that class form a subgraph with large minimum degree.

\begin{LEMMA} \label{lem:split}
Let $G$ be an edge-colored graph on $n$ vertices with minimum degree $\delta$, and let $k$ be a positive integer.  Let $\mathcal{C}$ be the set of colors in $G$.  If $nk \; \mathrm{exp} \left( - \frac{\delta}{8k} \right) < 1$, then there is a partition $\mathcal{C} = \bigsqcup_{i=1}^k \mathcal{C}_i$ such that for every vertex $v$ and color class $\mathcal{C}_i$, $v$ has at least $\frac{\delta}{2k}$ edges with colors from $\mathcal{C}_i$.
\end{LEMMA}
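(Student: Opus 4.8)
The plan is to use the probabilistic method: color each color in $\mathcal{C}$ independently and uniformly at random with one of $k$ labels, thereby obtaining a random partition $\mathcal{C} = \bigsqcup_{i=1}^k \mathcal{C}_i$, and show that with positive probability every vertex sees enough edges of each label class. Fix a vertex $v$. Among the $d(v) \ge \delta$ edges at $v$, each edge's color is assigned to class $\mathcal{C}_i$ independently with probability $\frac1k$; because the coloring of $G$ is (merely) edge-colored, distinct edges at $v$ \emph{might} share a color, but that only helps — we can instead group the edges at $v$ by their colors and observe that the number of edges at $v$ with colors in $\mathcal{C}_i$ stochastically dominates a sum of independent $\{0,1\}$ contributions, one per distinct color at $v$, with each term scaled by the multiplicity of that color. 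Actually the cleanest route is to pick a set of $\delta$ edges at $v$ (one can always throw away extras) and, since we only need a \emph{lower} bound on the count of class-$\mathcal{C}_i$ edges, it suffices to handle distinct colors; so let $X_{v,i}$ denote the number of edges at $v$ whose color lies in $\mathcal{C}_i$, and note $\BBE[X_{v,i}] \ge \frac{\delta}{k}$ since every one of the $\ge \delta$ edges contributes with probability exactly $\frac1k$.

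Next I would apply the concentration bound of Theorem~\ref{thm:conc}. Write $X_{v,i}$ as a sum of independent $[0,1]$-valued random variables indexed by the distinct colors $c$ appearing at $v$: the variable for color $c$ is $m_c / \delta$ if $c$ is assigned to class $i$ and $0$ otherwise, where $m_c$ is the number of edges of color $c$ at $v$ (after truncating so that $\sum_c m_c \ge \delta$); then $S_{v,i} := X_{v,i}/\delta$ has these independent summands in $[0,1]$ and $\mu := \BBE[S_{v,i}] \ge \frac1k$. Taking $s = \frac{1}{2k} \le \frac12 \mu$, Theorem~\ref{thm:conc} gives
\[
\BFP\!\left( X_{v,i} \le \tfrac{\delta}{2k} \right) = \BFP\!\left( S_{v,i} \le \tfrac{1}{2k} \right) \le \exp\!\left( -\tfrac{\delta}{8k} \right).
\]

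Finally, I would union-bound over all $n$ vertices $v$ and all $k$ classes $i$: the probability that some $(v,i)$ fails is at most $nk\,\exp(-\frac{\delta}{8k})$, which is less than $1$ by hypothesis. Hence there exists a partition for which every vertex has at least $\frac{\delta}{2k}$ edges in each color class, as required.

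The only genuinely delicate point is the bookkeeping of the previous paragraph — ensuring that $X_{v,i}$ really is a sum of \emph{independent} $[0,1]$ random variables with the right expectation. This is handled by indexing the summands by colors rather than by edges (colors are assigned independently, edges sharing a color are not), normalizing by $\delta$ so the summands lie in $[0,1]$, and truncating to exactly $\delta$ edges at $v$ so the expectation is exactly $\frac{\delta}{k}\ge$ what we need; once that is set up correctly, Theorem~\ref{thm:conc} and the union bound finish the argument immediately.
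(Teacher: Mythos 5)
There is a genuine gap, and it is precisely in the step you flag as ``the only genuinely delicate point.''

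The paper's proof relies crucially on the coloring being \emph{proper}: then the $d(v)$ edges incident to $v$ all have distinct colors, so $d_{v,i}$ is a sum of $d(v)$ independent $\{0,1\}$-valued indicator variables, i.e.\ $d_{v,i} \sim \mathrm{Bin}(d(v), 1/k)$ exactly. Applying Theorem~\ref{thm:conc} directly to this unnormalized sum with $s = \frac{\delta}{2k} \le \frac12 \mathbb{E}[d_{v,i}]$ yields $\exp(-s/4) = \exp(-\frac{\delta}{8k})$. (The lemma statement says only ``edge-colored,'' but the proof and every application use properness; the lemma is simply \emph{false} without it. For instance, take $K_{n,n}$ with all edges the same color: $\delta = n$, so $nk\exp(-\delta/(8k)) < 1$ holds for large $n$, yet no partition into $k \ge 2$ classes can place edges of that single color into more than one class.)

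Your attempt to drop properness by grouping edges according to their color and renormalizing by $\delta$ breaks the concentration bound. Once you set $S_{v,i} = X_{v,i}/\delta$ with summands $m_c/\delta \in [0,1]$ and $\mu = \mathbb{E}[S_{v,i}] = 1/k$, Theorem~\ref{thm:conc} applied with $s = \frac{1}{2k}$ gives $\mathbf{P}(S_{v,i} \le \frac{1}{2k}) \le \exp(-\frac{1}{8k})$, \emph{not} $\exp(-\frac{\delta}{8k})$: the theorem's bound $\exp(-s/4)$ has $s$ as the actual threshold, so scaling the random variable down by $\delta$ scales $s$ (and hence the exponent) down by $\delta$ as well. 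The resulting bound $\exp(-\frac{1}{8k})$ is close to $1$ and useless for the union bound. This is not a bookkeeping annoyance but the signature of the fact that the lemma cannot hold for non-proper colorings: when many edges at $v$ share a color, they succeed or fail together, and no normalization trick can manufacture the needed concentration. The fix is to invoke properness, take the $d(v)$ edges themselves as the independent $\{0,1\}$ summands, and apply the concentration inequality to the unnormalized count as the paper does.
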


\begin{proof}
Independently and uniformly at random assign each color $c \in \mathcal{C}$ to one of the $k$ color classes $\mathcal{C}_i$.  We will show that the resulting partition has the desired property with positive probability.

Fix a vertex $v$ and a color class $\mathcal{C}_i$.  Let $d(v)$ be the degree of $v$ in $G$, and let $d_{v,i}$ denote the number of edges incident to $v$ that have a color from $\mathcal{C}_i$.  Note that the color of every edge is in $\mathcal{C}_i$ with probability $\frac{1}{k}$.  Moreover, since the coloring is proper, the edges incident to $v$ have distinct colors, and hence are in $\mathcal{C}_i$ independently of one another.  Thus $d_{v,i} \sim \mathrm{Bin}\left( d(v), \frac{1}{k} \right)$, and $\BBE [ d_{v,i} ] = \frac{d(v)}{k} \ge \frac{\delta}{k}$ by our assumption on the minimum degree.

By Theorem \ref{thm:conc}, we have
\[ \BFP \left( d_{v,i} \le \frac{\delta}{2k} \right) \le \mathrm{exp} \left( - \frac{\delta}{8k} \right). \]
By a union bound,
\[ \BFP \left( \exists v, i : d_{v,i} \le \frac{\delta}{2k} \right) \le n k \; \mathrm{exp} \left( - \frac{\delta}{8k} \right) < 1, \]
and hence $\BFP \left( d_{v,i} > \frac{\delta}{2k} \; \forall v, i \right) > 0$.  Thus the desired partition exists.
\end{proof}

Given a set $X$ with a family of small subsets, the second lemma allows us to choose a subset of $X$ of specified size while retaining control over the sizes of the subsets.

\begin{LEMMA} \label{lem:shrink}
Let $\beta, \gamma \in (0,1)$ be parameters.  Suppose we have a set $X$ and a collection of subsets $X_j$, $1 \le j \le m$, such that $|X_j| \le \beta |X|$ for each $j$.  Provided $3m \; \mathrm{exp} \left( - \frac{1}{8} \beta \gamma |X| \right) < 1$, there exists a subset $Y \subset X$ with $\frac{1}{2} \gamma |X| \le |Y| \le 2 \gamma |X|$ such that for every $j$, we have $|X_j \cap Y| \le 4 \beta |Y|$.
\end{LEMMA}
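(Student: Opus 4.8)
The plan is to choose $Y$ as a random subset of $X$, including each element independently with probability $p = \gamma$, and show that with positive probability the resulting set satisfies all the stated bounds. This is the natural probabilistic approach, and the hypothesis $3m\,\mathrm{exp}(-\tfrac18\beta\gamma|X|) < 1$ is exactly what is needed to union-bound over the $m+2$ bad events.

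First I would control the size of $Y$. We have $|Y| \sim \mathrm{Bin}(|X|, \gamma)$, so $\mu := \BBE[|Y|] = \gamma|X|$. Applying Theorem \ref{thm:conc} with $s = \tfrac12\gamma|X|$ and $t = 2\gamma|X|$ gives $\BFP(|Y| \le \tfrac12\gamma|X|) \le \mathrm{exp}(-\tfrac18\gamma|X|)$ and $\BFP(|Y| \ge 2\gamma|X|) \le \mathrm{exp}(-\tfrac{3}{8}\gamma|X|)$; both are at most $\mathrm{exp}(-\tfrac18\beta\gamma|X|)$ since $\beta < 1$. Next, for each fixed $j$, $|X_j \cap Y| \sim \mathrm{Bin}(|X_j|, \gamma)$ with mean $\gamma|X_j| \le \beta\gamma|X|$. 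I would like to bound the probability that $|X_j\cap Y|$ exceeds roughly $4\beta$ times the (random) size of $Y$; since on the good size event $|Y| \ge \tfrac12\gamma|X|$, it suffices to show $|X_j \cap Y| \le 2\beta\gamma|X|$, i.e. that the binomial exceeds twice its mean. Theorem \ref{thm:conc} with $t = 2\gamma|X_j| \ge 2\mu$ — or, to get a clean exponent in terms of $\beta\gamma|X|$, applied with the larger threshold $t = 2\beta\gamma|X|$ — yields $\BFP(|X_j\cap Y| \ge 2\beta\gamma|X|) \le \mathrm{exp}(-\tfrac{3}{16}\cdot 2\beta\gamma|X|) \le \mathrm{exp}(-\tfrac18\beta\gamma|X|)$.

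Finally I would take a union bound over the two size-failure events and the $m$ events $\{|X_j\cap Y| \ge 2\beta\gamma|X|\}$, for a total probability at most $(m+2)\,\mathrm{exp}(-\tfrac18\beta\gamma|X|) \le 3m\,\mathrm{exp}(-\tfrac18\beta\gamma|X|) < 1$ (using $m \ge 1$, so $m + 2 \le 3m$). Hence there is a choice of $Y$ avoiding all bad events: it has $\tfrac12\gamma|X| \le |Y| \le 2\gamma|X|$, and for every $j$ we have $|X_j\cap Y| \le 2\beta\gamma|X| \le 4\beta|Y|$, the last inequality because $|Y| \ge \tfrac12\gamma|X|$.

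The only mild subtlety — the "hard part," though it is routine — is that the condition $|X_j\cap Y| \le 4\beta|Y|$ couples the numerator and the denominator, so one cannot directly concentrate the ratio. The fix above is to decouple them: bound $|Y|$ from below on a high-probability event and bound each $|X_j\cap Y|$ from above by an absolute quantity ($2\beta\gamma|X|$), then combine. One must also make sure every invocation of Theorem \ref{thm:conc} respects its hypotheses $s \le \tfrac12\mu$ and $t \ge 2\mu$, which is why $t = 2\beta\gamma|X|$ is a legitimate threshold (it is $\ge 2\gamma|X_j| = 2\mu$), and one should note $\gamma|X|$ may be assumed large enough for the bounds to be meaningful, since otherwise the hypothesis $3m\,\mathrm{exp}(-\tfrac18\beta\gamma|X|)<1$ already forces $m$ to be small and the statement is easy (indeed the hypothesis itself guarantees $\tfrac18\beta\gamma|X| > \ln(3m) \ge \ln 3$).
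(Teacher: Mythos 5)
Your proof is correct and follows the paper's argument essentially verbatim: both take $Y$ to be the $\gamma$-random subset, apply the Hoeffding bounds of Theorem \ref{thm:conc} to $|Y|$ and to each $|X_j\cap Y|$ at the absolute threshold $2\beta\gamma|X|$, and union-bound to conclude. The decoupling remark you make (bounding $|X_j\cap Y|$ by the absolute quantity $2\beta\gamma|X|$ rather than the random $4\beta|Y|$) is exactly the device the paper uses, just stated more explicitly.
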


\begin{proof}
Let $Y$ be the random subset of $X$ obtained by selecting each element independently with probability $\gamma$.  Let $Y_j = X_j \cap Y$.  Then we have $|Y| \sim \mathrm{Bin}(|X|, \gamma)$, and $|Y_j| \sim \mathrm{Bin}(|X_j|, \gamma)$.

By Theorem \ref{thm:conc},
\[ \BFP \left( |Y| \le \frac{1}{2} \gamma |X| \right) \le \mathrm{exp} \left( - \frac{1}{8} \gamma |X| \right), \qquad \textrm{and} \qquad \BFP \left( |Y| \ge 2 \gamma |X| \right) \le \mathrm{exp} \left( - \frac{3}{8} \gamma |X| \right). \]
Since $\BBE [ |Y_j| ] = \gamma |X_j| \le \beta \gamma |X|$, Theorem \ref{thm:conc} also gives
\[ \BFP \left( |Y_j| \ge 2 \beta \gamma |X| \right) \le \mathrm{exp} \left( - \frac{3}{8} \beta \gamma |X| \right). \]
By a union bound, the probability of any of these events occuring can be bounded by
\[ \mathrm{exp} \left( - \frac{1}{8} \gamma |X| \right) + \mathrm{exp} \left( - \frac{3}{8} \gamma |X| \right) + m \; \mathrm{exp} \left( - \frac{3}{8} \beta \gamma |X| \right) \le 3m \; \mathrm{exp} \left( - \frac{1}{8} \beta \gamma |X| \right) < 1. \]
Hence, with positive probability, none of these events occur.  In this case we have a subset $Y \subset X$ with $\frac{1}{2} \gamma |X| < |Y| < 2 \gamma |X|$ and $|X_j \cap Y| < 2 \beta \gamma |X| < 4 \beta |Y|$, as required.
\end{proof}

\section{Proof of the Main Theorem}

We will restrict our attention to bipartite graphs, and prove
Theorem \ref{thm:main} for bipartite graphs by using induction
within this class. The theorem for general graphs will then easily
follow since every graph contains a bipartite subgraph that contains
at least half of its original edges.

Our general strategy for proving Theorem \ref{thm:main} is as
follows.  We will choose an arbitrary vertex $v_0$, and grow a
subtree $T$ of $G$ rooted at $v_0$.  This subtree will have the
property that every path from $v_0$ in $T$ will be rainbow. The key
proposition will show that if $G$ has no short rainbow cycles, then
the levels of the tree must grow very rapidly, and will eventually
need to be larger than $G$, which is impossible.

In this section we formalize this argument, although the proof of
the key proposition is deferred to the next section.

\begin{proof}[Proof of Theorem \ref{thm:main}]
Fix $\varepsilon > 0$.  Without loss of generality, we may assume
$\varepsilon < \frac{1}{2}$, as otherwise the result follows from
the bound of $\mathrm{ex}^{\ast}(n,C_{2k}) = O \left(n^{2 -
\frac{1}{s}}\right)$ (with $s = 2$) given in \cite{KMSV}. We wish to
show there is a constant $C$ such that any edge-colored bipartite
graph $G$ on $n$ vertices with at least $C n^{1 + \varepsilon}$
edges contains a rainbow cycle of length at most $2k$, where $k =
\left \lceil \frac{\ln 4 - \ln \varepsilon}{ \ln (1 + \varepsilon)}
\right \rceil$.

We will prove this by induction on $n$.  For the base case, note
that if $n \le C$, then $C n^{1 + \varepsilon} > n^2$.  Hence there is
no graph on $n$ vertices with $C n^{1 + \varepsilon}$ edges, and so the
statement is vacuously true.  Thus by making the constant $C$ large,
we force $n$ to be large in the induction step below.  In
particular, we will require $C > 8k$ to be large enough that every $n \ge
C$ satisfies the following inequalities:
\[ n k \; \mathrm{exp} \left( - n^{\varepsilon} \right) < 1, \quad n^{\frac{1}{4} \varepsilon^3} > [4 (k+1)]^{2 + \varepsilon} \log n, \quad \textrm{and} \quad n^{\frac{1}{2} \varepsilon^2} > 2^{4 + (3k+2) \varepsilon} k^{2 + \varepsilon} ( \log n )^{1 + k \varepsilon}. \]

Now suppose $n > C$, and $G$ has at least $C n^{1 + \varepsilon}$
edges.  If $G$ has a vertex of degree at most $C n^{\varepsilon}$, then
by removing it we have a subgraph on $n-1$ vertices with at least $C
n^{1 + \varepsilon} - Cn^{\varepsilon} > C (n-1)^{1 + \varepsilon}$ edges.  By
induction, this subgraph contains a rainbow cycle of length at most
$2k$.  Hence we may assume $G$ has minimum degree at least $C n^{\varepsilon}$.

We now apply Lemma \ref{lem:split}.  By our bound on $C$, we have $n k \; \mathrm{exp} \left( - \frac{C n^{\varepsilon}}{8 k} \right) < 1$.  Hence we can split the colors into disjoint classes $\mathcal{C}_i$, $1 \le i \le k$, such that for each class $\mathcal{C}_i$, every vertex is incident to at least $\frac{C}{2k} n^{\varepsilon}$ edges of a color in $\mathcal{C}_i$.

\medskip

Let $v_0$ be an arbitrary vertex in $G$.  We will construct a subtree $T$ rooted at $v_0$, with vertices arranged in levels $L_i$, starting with $L_0 = \{ v_0 \}$.  Given a level $L_i$, the next level $L_{i+1}$ will be a carefully chosen subset of neighbors of $L_i$ using just the edges with colors from $\mathcal{C}_{i+1}$.  Note that this ensures that every vertex has a rainbow path back to $v_0$ in $T$.  Moreover, since every vertex in $L_i$ has a path of length $i$ back to $v_0$, and $G$ is bipartite, it follows that $L_i$ is an independent set in $G$.  It is useful to parametrize the size of the levels by defining $\alpha_i$ such that $|L_i| =n^{\alpha_i}$.

As mentioned above, every vertex $v \in T$ has a rainbow path back to $v_0$.  It will be important to keep track of which colors are used on this path.  Hence for every color $c$ and level $i$, we define $X_{i,c}$ to be the vertices in $L_i$ with an edge of color $c$ in their path back to $v_0$.  Since the path from $v$ to $v_0$ has length $i$, it follows that $\{ X_{i,c} \}_c$ forms an $i$-fold cover of $L_i$.  If we have a vertex $w \in L_{i+1}$ adjacent to $v_1, v_2 \in L_i$ with $v_1$ and $v_2$ using disjoint sets of colors on their paths back to $v_0$, this gives a rainbow cycle of length $2(i+1)$.
It turns out that forbidding such configurations gives large expansion from $L_i$ to $L_{i+1}$.

\medskip

The key proposition below formalizes the above observation and shows
that the levels grow quickly. As shown below, we will need to
maintain control over the sets $X_{i,c}$. To see the necessity of
this, suppose that we had $X_{i,c} = L_i$ for some $i$ and $c$. Then
every path through $L_i$ to $v_0$ would use the color $c$, and we
could not hope to find a rainbow cycle using our strategy. Note that
in the special case where the given graph is
$Cn^\varepsilon$-regular and the graph is colored using exactly
$Cn^\varepsilon$ colors, for every index $i$, there exists a color
$c$ such that $|X_{i,c}| \ge
\frac{|L_i|}{Cn^\varepsilon}=\Omega(n^{\alpha_i-\varepsilon})$. This
implies that we cannot hope for a upper bound on $|X_{i,c}|$ that is
better than $|X_{i,c}| = O(n^{\alpha_i-\varepsilon})$. The bound we
achieve in the following proposition is a poly-logarithmic factor
off this `optimal' bound.

\begin{PROP} \label{prop:key}
Given $1 \le i < k$, suppose that we are given sets $L_0, \cdots, L_i$ and sets $\{X_{i,c}\}_c$ satisfying the
following:
\begin{enumerate}
\item[(i)]
$|L_i| \ge \frac{1}{4}|L_j|$ for $0 \le j < i$, and
$\alpha_i \le 1 - \frac{1}{4} \varepsilon^2$, and
\item[(ii)] $|X_{i,c}| \le (8 \log n)^{i} n^{\alpha_i - \varepsilon}$ for all $c \in \mathcal{C}$.
\end{enumerate}
Then there is a set $L_{i+1}$ of neighbors of $L_i$ using colors from $\mathcal{C}_{i+1}$ such that:
\begin{enumerate}
    \item $\left( 1 + \frac{\varepsilon}{2} \right) - \alpha_{i+1} \leq \left( 1 + \varepsilon \right)^{-1} \left[ \left( 1 + \frac{\varepsilon}{2} \right) - \alpha_i \right]$, and
    \item for all colors $c$, we have $|X_{i+1,c}| \le (8 \log n)^{i+1} n^{\alpha_{i+1} - \varepsilon}$.
\end{enumerate}
Moreover, even if we have $(ii') \, |X_{i,c}| \le 4 (8 \log n)^{i} n^{\alpha_i - \varepsilon}$ instead of $(ii)$,
we can still find a set $L_{i+1}$ satisfying Property 1.
\end{PROP}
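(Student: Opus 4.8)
\emph{Plan.} The whole argument turns on a single use of the hypothesis that $G$ has no rainbow cycle of length at most $2k$, which I would extract first and then feed into a double‑counting / probabilistic‑shrinking argument. Here $P(v)$ denotes the set of colours on the (rainbow) path from $v$ to $v_0$ in $T$, so $|P(v)|=j$ for $v\in L_j$ and $X_{j,c}=\{v\in L_j:c\in P(v)\}$. Suppose $w\notin L_i$ has two $\mathcal C_{i+1}$‑neighbours $v_1,v_2\in L_i$ with $P(v_1)\cap P(v_2)=\emptyset$. Concatenating the path from $v_1$ to $v_0$, the path from $v_0$ to $v_2$, and the edges $v_2w$ and $wv_1$ gives a closed walk of length $2(i+1)\le 2k$; its $2i$ old edges have distinct colours and lie in $\mathcal C_1\cup\cdots\cup\mathcal C_i$, the two new edges lie in $\mathcal C_{i+1}$ (disjoint from the old colours) and are distinct since the colouring is proper at $w$ — so this is a rainbow $C_{\le 2k}$, a contradiction. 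Hence for every $w$ the family $\{P(v):v\in L_i\cap N_{\mathcal C_{i+1}}(w)\}$ is pairwise intersecting.

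From this I would deduce two quantitative facts. First, writing $d^-(w)$ for the number of $\mathcal C_{i+1}$‑edges from $w$ into $L_i$: fixing any $v^\ast\in L_i\cap N_{\mathcal C_{i+1}}(w)$, every other such neighbour $v$ satisfies $P(v)\cap P(v^\ast)\neq\emptyset$, so $L_i\cap N_{\mathcal C_{i+1}}(w)\subseteq\bigcup_{c\in P(v^\ast)}X_{i,c}$ and thus $d^-(w)\le i\,(8\log n)^i n^{\alpha_i-\varepsilon}$ under $(ii)$ (and $\le 4i\,(8\log n)^i n^{\alpha_i-\varepsilon}$ under $(ii')$). Second, every ``cherry'' $v_1wv_2$ with distinct $v_1,v_2\in L_i$ and $v_1w,v_2w\in\mathcal C_{i+1}$ has $v_1,v_2\in X_{i,c}$ for some common $c$, so the number of cherries is at most $\sum_c|X_{i,c}|^2\le\bigl(\max_c|X_{i,c}|\bigr)\sum_c|X_{i,c}|\le i\,(8\log n)^i n^{2\alpha_i-\varepsilon}$, using $\sum_c|X_{i,c}|=i|L_i|$.

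Next I would build $L_{i+1}$. Let $\mathcal N$ be the set of vertices outside $L_i$ joined to $L_i$ by a $\mathcal C_{i+1}$‑edge; there are at least $|L_i|\cdot\frac{C}{2k}n^{\varepsilon}$ such edges, so by the degree bound above $|\mathcal N|$ is large — and more usefully one passes to the $\mathcal C_{i+1}$‑neighbourhoods of a greedily chosen independent set in the ``colour‑conflict graph'' on $L_i$ (edges: conflicting pairs, which by the first paragraph have disjoint such neighbourhoods; degrees bounded by the first estimate of the previous paragraph), obtaining a set of candidate vertices of the required order of magnitude, each equipped with a parent $v(w)\in L_i$ and colour set $P(w)=P(v(w))\cup\{\text{colour of }v(w)w\}$ of size $i+1$. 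Then I would apply Lemma~\ref{lem:shrink} with $X$ this candidate set and the family of ``bad sets'' the traces of the $X_{i+1,c}$ on it, cutting down to $L_{i+1}$ of exactly the prescribed size $n^{\alpha_{i+1}}$ while keeping $|X_{i+1,c}\cap L_{i+1}|$ small for every $c$. Verifying the hypotheses of that lemma is where one treats $c\in\mathcal C_{i+1}$ (such a colour class is a matching, so at most $|L_i|$ candidates use colour $c$) and $c\notin\mathcal C_{i+1}$ (then $w\in X_{i+1,c}$ forces $v(w)\in X_{i,c}$, and balancing the parent assignment together with $(ii)$ controls the count) separately, the passage from exponent $i$ to $i+1$ absorbing exactly one extra factor $8\log n$.

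Property~2 is then the output of Lemma~\ref{lem:shrink}. For Property~1, put $g_j=(1+\tfrac\varepsilon2)-\alpha_j$; the lower bound on $|L_{i+1}|$ gives a lower bound on $\alpha_{i+1}$, and $g_{i+1}\le(1+\varepsilon)^{-1}g_i$ becomes a direct computation using $g_i\le 1+\tfrac\varepsilon2$, the hypothesis $\alpha_i\le 1-\tfrac14\varepsilon^2$, and the size inequalities forced on $C$ and $n$ in the proof of Theorem~\ref{thm:main}, which make the $(8\log n)^i$‑type factors negligible against powers of $n^{\varepsilon}$. For the ``moreover'', replacing $(ii)$ by $(ii')$ only inflates the degree bound by a constant, hence the lower bound on $|L_{i+1}|$ by a bounded power of $\log n$, which still suffices for Property~1 since that property needs only a lower bound on $\alpha_{i+1}$, not the finer control of the $X_{i+1,c}$. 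I expect the real work to be concentrated in the third paragraph together with the arithmetic of Property~1: arranging the parent assignment and the application of Lemma~\ref{lem:shrink} so that the lower bound on $|L_{i+1}|$ and the upper bounds on the $|X_{i+1,c}|$ hold simultaneously, and checking that the resulting bound on $\alpha_{i+1}$ is strong enough over the whole permitted range $0\le\alpha_i\le 1-\tfrac14\varepsilon^2$ to contract the gap by the factor $(1+\varepsilon)^{-1}$.
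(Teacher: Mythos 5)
There is a genuine gap, concentrated exactly where you flag the ``real work'' to be: the arithmetic of Property~1 does not close under your estimates, and the route you sketch cannot be made to yield the contraction factor $(1+\varepsilon)^{-1}$.

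Your key observation (pairwise-intersecting colour sets at a common $\mathcal{C}_{i+1}$-neighbour) is correct and is exactly the starting point of the paper's proof. From it you deduce the degree bound $d^-(w)\le i\,(8\log n)^i\,n^{\alpha_i-\varepsilon}$. But feeding this into an edge count only gives $|L_{i+1}|\ge e(L_i,L_{i+1})/\max_w d^-(w)\gtrsim n^{2\varepsilon}$ up to polylog factors, i.e.\ $\alpha_{i+1}\gtrsim 2\varepsilon$, \emph{independently of $\alpha_i$}. Property~1, rewritten as $\alpha_{i+1}\ge\alpha_i+\varepsilon(1-\alpha_i)+O(\varepsilon^2)$, requires $\alpha_{i+1}$ to be nearly as large as $\alpha_i$ plus a gain; when $\alpha_i$ is close to $1-\frac14\varepsilon^2$ this is approximately $1$, while your bound stalls near $2\varepsilon$. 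The cherry count you derive is also a valid bound but is never used and, by Jensen, would only upgrade $2\varepsilon$ to roughly $3\varepsilon$, which is still far short. In other words, a pointwise degree bound (or a second-moment cherry bound) extracted from condition $(ii)$ is structurally too weak: it measures expansion additively in $\varepsilon$, not multiplicatively in the gap $(1+\tfrac\varepsilon2)-\alpha_i$.

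What is missing is a second, nested invocation of the inductive Tur\'an hypothesis on an auxiliary bipartite subgraph, which is the real engine of the proposition. The paper defines, after a dyadic degree split giving a near-regular level of typical degree $d\gtrsim n^{\varepsilon-\delta_i}$, the cover $W_c=\{y\in L_{i+1}: d(y,X_{i,c})\ge d/i\}$. Comparing $\sum_c|W_c|\ge|L_{i+1}|>k|L_i|$ with $\sum_c|X_{i,c}|=i|L_i|$ yields a colour $c$ with $|W_c|>|X_{i,c}|$. Two double counts on $e(W_c,X_{i,c})$ then squeeze $d$ from both sides: the throttled use of at most $\tfrac{C}{2k}n^\varepsilon$ edges per vertex of $L_i$ gives an upper bound on $|W_c|$ in terms of $n^{\delta_i}$, while the inductive hypothesis applied to $G[W_c\cup X_{i,c}]$ (fewer than $n$ vertices, no short rainbow cycle) gives $d<2^{1+\varepsilon}Ck|W_c|^\varepsilon$. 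Chaining these through $d\gtrsim n^{\varepsilon-\delta_i}$ is precisely what produces the factor $(1+\varepsilon)^{-1}$. Neither the dyadic near-regularisation of $L_{i+1}$ (needed to make $d$ simultaneously a lower bound for degrees and comparable to $e(L_i,L_{i+1})/|L_{i+1}|$) nor the set $W_c$, nor the nested inductive application, appears in your sketch; without them your ``direct computation'' for Property~1 does not go through. As a secondary point, your ``independent set in the colour-conflict graph'' construction of $L_{i+1}$ is underspecified (an independent set there has pairwise-\emph{intersecting} colour sets, which does not obviously buy a large level), and your Property~2 plan of applying Lemma~\ref{lem:shrink} presupposes that parents can be assigned so that the pre-shrunken traces $|X_{i+1,c}|$ already satisfy $|X_{i+1,c}|\le\beta|X|$; the paper instead assigns parents uniformly at random and verifies this directly via Theorem~\ref{thm:conc}, which you would still need in some form.
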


This proposition will be proven in Section 4.  Here we show how to
prove Theorem \ref{thm:main} using this proposition. We first show
how to construct sets $L_0, L_1$, and $\{X_{1,c}\}_c$. For $i = 0$,
as mentioned above, we have $L_0 = \{v_0\}$ and thus $\alpha_0 = 0$.
Note that $v_0$ has at least $\frac{C}{2 k} n^{\varepsilon}$
neighbors with edge colors from $\mathcal{C}_1$.  Let $L_1$ be these
neighbors.  Then we have $|L_1| = n^{\alpha_1} \ge \frac{C}{2k}
n^{\varepsilon}$, and so $\alpha_1 \ge \varepsilon$.  Hence $\left(
1 + \frac{\varepsilon}{2} \right) - \alpha_1 \le 1 -
\frac{\varepsilon}{2} < (1 + \varepsilon)^{-1} \left[ \left( 1 +
\frac{\varepsilon}{2} \right) - \alpha_0 \right]$.  Since $v_0$ has
at most one edge of each color, we have $|X_{1,c}| \le 1 < (8 \log n
)^1 n^{\alpha_1 - \varepsilon}$. Now we can iteratively apply
Proposition \ref{prop:key} to construct sets $L_i$ and $X_{i,c}$ for
$i=2, \cdots, k$ as long as $\alpha_{i-1} \le 1 -
\frac{1}{4}\varepsilon^2$.  Note that Property 1 above ensures that Condition (i) is always satisfied with every iteration.

\medskip

Suppose we successfully construct the sets $L_0, L_1, \hdots, L_k$ by repeatedly applying Proposition 3.1.  Recalling that $\alpha_0 = 0$, we
get
\[ \left( 1+ \frac{\varepsilon}{2} \right) - \alpha_k \le (1 + \varepsilon)^{-1} \left[ \left( 1 + \frac{\varepsilon}{2} \right) - \alpha_{k-1} \right] \le \hdots \le (1 + \varepsilon)^{-i} \left[ \left(1 + \frac{\varepsilon}{2} \right) - \alpha_0 \right], \]
and so
\[ \alpha_k \ge \left( 1 + \frac{\varepsilon}{2} \right) \left( 1 - \left(1 + \varepsilon \right)^{-k} \right). \]
Substituting $k = \left \lceil \frac{\ln 4 - \ln
\varepsilon}{\ln ( 1 + \varepsilon ) } \right \rceil$, we have
\[ \alpha_k \ge \left( 1 + \frac{\varepsilon}{2} \right) \left( 1 - \frac{1}{4} \varepsilon \right) \ge 1 + \frac{1}{8} \varepsilon, \]
and so $|L_k| = n^{\alpha_k} \ge n^{1 + \frac{1}{8} \varepsilon}$.  Thus $|L_k| > n$, which gives the necessary contradiction.

\medskip

Hence there must be some $i < k$ such that $1 - \frac{1}{4}
\varepsilon^2 < \alpha_i \le 1$.  The sizes of the sets $X_{i,c}$
satisfy
 $|X_{i,c}| \le (8 \log n)^i n^{\alpha_i - \varepsilon} = (8 \log n)^i n^{- \varepsilon} |L_i|$.
Note that the total number of colors is $m = | \mathcal{C} | < n^2$,
since there cannot be more colors than edges in $G$. Apply Lemma
\ref{lem:shrink} with $X = L_i$, subsets $X_{i,c}$ for all $c \in
\mathcal{C}$, $\beta = ( 8 \log n )^i n^{- \varepsilon}$ and $\gamma
= \frac{1}{2} n^{1 - \frac{1}{4} \varepsilon^2 - \alpha_i}$. This is
possible since
\[ 3 m \; \mathrm{exp} \left( - \frac{1}{8} \beta \gamma |L_i| \right) < 3 n^2 \; \mathrm{exp} \left( - \frac{1}{16} ( 8 \log n)^i n^{1 - \varepsilon - \frac{1}{4} \varepsilon^2} \right) < 1. \]
We obtain a set $Y \subset L_i$ such that $\frac{1}{2} \gamma |L_i| \le |Y| \le 2 \gamma |L_i|$ and $|Y \cap X_{i,c}| \le 4 \beta |Y|$ for all $c$.
Note that $ \frac{1}{4} n^{1 - \frac{1}{4} \varepsilon^2} \le |Y| \le n^{1 - \frac{1}{4} \varepsilon^2}$ and $|X_{i,c} \cap Y| \le 4 ( 8 \log n)^i |Y| n^{- \varepsilon}$.  Moreover, since we must have had $\alpha_{i-1} \le 1 - \frac{1}{4} \varepsilon^2$, we have $|Y| \ge \frac{1}{4} |L_j|$ for all $0 \le j < i$.
Let $L_i' = Y$, $|L_i'| = n^{\alpha_i'}$, and let $X_{i,c}' = X_{i,c} \cap Y$.
Then the above inequalities imply
$1 - \frac{1}{3} \varepsilon^2 < 1 - \frac{1}{4} \varepsilon^2 - \frac{2}{\log n} \le \alpha_i' \le 1 - \frac{1}{4} \varepsilon^2$, and $|X_{i,c}'| \le 4 (8 \log n)^i n^{\alpha_i' - \varepsilon}$.  We can now apply Proposition \ref{prop:key} to the sets $L_i'$ and $X_{i,c}'$. This gives the next level $L_{i+1}$ with
\[ \left( 1 + \frac{\varepsilon}{2} \right) - \alpha_{i+1} \le ( 1 + \varepsilon )^{-1} \left[ \left( 1 + \frac{\varepsilon}{2} \right) - \alpha_i' \right] \le (1 + \varepsilon)^{-1} \left[ \frac{\varepsilon}{2} + \frac{\varepsilon^2}{3} \right], \]
and so $\alpha_{i+1} \ge 1 + \frac{\varepsilon^2}{6 ( 1 + \varepsilon ) }$.  Again, this implies $|L_{i+1}| \ge n^{1 + \frac{\varepsilon^2}{6 (1 + \varepsilon)}} > n$, which is a contradiction.

\medskip

Thus $G$ must have a rainbow cycle of length at most $2k$, which completes the inductive step, and hence the proof of Theorem \ref{thm:main}.
\end{proof}

\section{Proof of Proposition \ref{prop:key}}

In this section, we furnish a proof of Proposition \ref{prop:key}. Our goal is to construct the level $L_{i+1}$ with associated sets $X_{i+1, c}$ satisfying the following properties:
\begin{enumerate}
    \item $ \left( 1 + \frac{\varepsilon}{2} \right) - \alpha_{i+1} \le ( 1 + \varepsilon )^{-1} \left[ \left( 1 + \frac{\varepsilon}{2} \right) - \alpha_i \right],$ and
    \item for all colors $c$, we have $|X_{i+1,c}| \le (8 \log n)^{i+1} n^{\alpha_{i+1} - \varepsilon}$.
\end{enumerate}

\begin{proof}[Proof of Proposition \ref{prop:key}]

Suppose that $1 \le i \le k-1$, and levels $L_j$ for $j \le i$ satisfy
Properties (i) and (ii) given in Proposition \ref{prop:key}.  Recall that by the inductive hypothesis, we know that Theorem \ref{thm:main} is true for any graph whose number of vertices $n'$ is less than $n$.  Thus we may assume that all the subgraphs of $G$ on $n'$ vertices contain at most $C [n']^{1 + \varepsilon}$ edges (otherwise we would already have a rainbow cycle of length at most $2k$).  Using this, we will show how to construct the level $L_{i+1}$ satisfying both properties.

Consider the edges of colors from $\mathcal{C}_{i+1}$ coming out of
$L_i$.  Each vertex in $L_i$ has at least $\frac{C}{2k}
n^{\varepsilon}$ such edges; importantly, we will use only
$\frac{C}{2k} n^{\varepsilon}$ of them, and disregard any additional
edges. The reason we expand the levels `slowly' in such a way is to
prevent some of the sets $X_{i,c}$ from expanding too fast. Indeed,
if we were to use all the edges, then some $X_{i,c}$ might expand
faster than we would wish, and this eventually might violate
Property 2.

Thus we have a total of $\frac{C}{2k} |L_i| n^{\varepsilon}$
edges.  If at least half of these edges went back to vertices in
$L_0 \cup L_1 \cup \hdots \cup L_{i-1}$, then the vertices in $L_0
\cup L_1 \cup \hdots \cup L_i$ would span at least $\frac{C}{4k}
|L_i| n^{\varepsilon}$ edges.  This gives us a graph on at most $4k
|L_i|$ vertices with at least $\frac{C}{4k} |L_i| n^{\varepsilon}$ edges.
By the inductive hypothesis, we have
\[ \frac{C}{4k} |L_i| n^{\varepsilon} \le C \left[4 k |L_i| \right]^{1 + \varepsilon}, \]
which is equivalent to
\[ \left( \frac{n}{|L_i|} \right)^{\varepsilon} = n^{(1 - \alpha_i) \varepsilon} \le (4 k)^{2 + \varepsilon}. \]
However, by the condition that $\alpha_i \le 1 - \frac{1}{4} \varepsilon^2$, this contradicts our bound on $n$.

Hence we may assume that at least $\frac{C}{4k} |L_i| n^{\varepsilon}$ edges go to vertices not in $L_0 \cup L_1 \cup \hdots \cup L_{i-1}$; call this set of new vertices $Y$.  Partition the vertices in $Y$ into $\log n$ sets $Y_j$, $0 \le j \le \log n - 1$, with $y \in Y_j$ if and only if $2^j \le d(y,L_i) < 2^{j+1}$ (here we are only considering edges of a color from $\mathcal{C}_{i+1}$).
By the pigeonhole principle, there is some $j^{\ast}$ such that $Y_{j^{\ast}}$ receives at least $\frac{C}{4 k \log n }|L_i|n^{\varepsilon}$ edges from $L_i$.  Let $L_{i+1} = Y_{j^{\ast}}$, and for convenience define $d = 2^{j^{\ast}}$.  As always, we will define $\alpha_{i+1}$ by $|L_{i+1}| = n^{\alpha_{i+1}}$.  Let $\delta_i = \alpha_{i+1} - \alpha_i$.

Every vertex $y \in L_{i+1}$ has degree between $d$ and $2d$ in $L_i$.
Double-counting the edges between $L_i$ and $L_{i+1}$, we have
\[ \frac{C}{4 k \log n}|L_i|n^{\varepsilon} \le e(L_i, L_{i+1}) \le 2 d |L_{i+1}|. \]
This gives
\begin{align} \label{eq:degree}
d \ge \frac{C}{8 k \log n} \frac{|L_i| n^{\varepsilon}}{|L_{i+1}| }  = \frac{C}{8k \log n} n^{\varepsilon - \delta_i}.
\end{align}

We will show below that the set $L_{i+1}$ is large enough to provide the expansion required for Property 1.  First, however, note that every vertex $y \in L_{i+1}$ can have many edges back to $L_i$.  In order to make this a level in our tree $T$, for each vertex we need to choose one edge to add to $T$.  The choice of edge induces a path from $y$ back to $v_0$, and hence these choices determine the sets $X_{i+1,c}$.
We will later show that we can choose the edges so as to satisfy Property 2 as well.

\subsection{Property 1}

We begin by providing a heuristic of the argument.
Given the level $L_i$ and the sets $X_{i,c}$,
we show that $L_{i+1}$ can be partitioned into sets $W_{c}$ such that
for every color $c$, the number of edges between $X_{i,c}$ and $W_{c}$ is
$\Omega(d|W_{c}|)$. Suppose that there exists an index
$c$ such that $|X_{i,c}| \le |W_{c}|$.
On one hand, the fact that we used only $\frac{C}{2K}n^\varepsilon$ edges
from each vertex in $X_{i,c}$ gives an upper bound on the size of $|W_{c}|$
in terms of $\delta_i$. On the other hand, the fact that we have a subgraph $G[X_{i,c}\cup W_{c}]$ which has
at most $2|W_c|$ vertices and contains at least $\Omega(d|W_{c}|)$ edges,
will by our inductive hypothesis give a lower bound on the size of $|W_{c}|$ in
terms of $\delta_i$.
By combining these bounds, we conclude that $\delta_i$ has to be quite large.

We will use Condition $(ii')$ instead of $(ii)$ in Proposition \ref{prop:key}.
Thus for all $c \in \mathcal{C}$, we have $|X_{i,c}| \le 4 (8 \log n)^{i} n^{\alpha_i - \varepsilon}$.
First we claim a rather weak bound $|L_{i+1}| > k |L_i|$.  Suppose this were not the case.
Then in the set $L_i \cup L_{i+1}$ of at most $(k+1)|L_i|$ vertices, we have at least $\frac{C}{4 k \log n}|L_i| n^{\varepsilon}$ edges.  By induction, we must have $\frac{C}{4 k \log n}|L_i|n^{\varepsilon} \le C [(k + 1) |L_i|]^{1 + \varepsilon}$, or, equivalently,
\[ \left( \frac{n}{|L_i|} \right)^{\varepsilon} = n^{(1 - \alpha_i) \varepsilon}\le 4k ( k + 1)^{1 + \varepsilon} \log n, \]
which contradicts our choice of $n$ (recall that $\alpha_i \le 1 - \frac{1}{4} \varepsilon^2$).  Thus we must have $|L_{i+1}| > k |L_i|$.

\medskip

Consider a fixed vertex $y \in L_{i+1}$, and recall that $d(y, L_i) \ge d$.  Consider any neighbor $x \in L_i$ of $y$.  The path from $v_0$ to $x$ in $T$ uses $i$ different colors $\{c_j : 1 \le j \le i \}$.
If any other neighbor $x' \in L_i$ of $y$ has a path to $v_0$
that avoids the colors $\{ c_j \}$, then we have a rainbow cycle of length $2(i+1) \le 2k$.  Thus for every neighbor $x' \in L_i$ of $y$, we must have $x' \in \cup_{j=1}^i X_{i,c_j}$.  By the pigeonhole principle, there is some $j$ such that $d(y, X_{i,c_j}) \ge \frac{d}{i}$.
Informally, this observation asserts that
every vertex $y \in L_{i+1}$ sends a large proportion of its edges to some set $X_{i,c_j}$.

For each color $c$, let $W_c$ be the set of vertices $y \in L_{i+1}$
such that $d(y,X_{i,c}) \ge \frac{d}{i}$, and note that $\{ W_c \}$
forms a cover of $L_{i+1}$.  Thus $\sum_c |W_c| \ge |L_{i+1}| > k
|L_i|$.  On the other hand, the sets $\{ X_{i,c} \}_c$ form an
$i$-fold cover of $L_i$, and so $\sum_c |X_{i,c}| = i |L_i| < k
|L_i|$.  Consequently, $\sum_c (|W_c| - |X_{i,c}|) > 0$, and so for
some particular color $c$ we have $|W_{c}| > |X_{i,c}|$. As stated
above, we will exploit the fact that there are at least $\frac{d}{i}
|W_{c}|$ edges between $W_{c}$ and $X_{i, c}$ in two different ways
to get two inequalities. Together, these will give the claimed
inequality between $\alpha_i$ and $\alpha_{i+1}$.

\medskip

First, recall that we used at most $\frac{C}{2k} n^{\varepsilon}$
edges incident to each vertex in $L_i$ to construct the set
$L_{i+1}$.  By double-counting the edges between $W_{c}$ and
$X_{i,c}$, we have
\begin{align*}
 \frac{d}{k} |W_{c}| < \frac{d}{i} |W_{c}| \le e( W_{c}, X_{i, c} ) \le \frac{C}{2k} |X_{i,c}| n^{\varepsilon},
\end{align*}
which by \eqref{eq:degree}, gives $|W_{c}| < \frac{C}{2d} |X_{i, c}|
n^{\varepsilon} \le 4 k \log n |X_{i, c}| n^{\delta_i}$. Using
Condition $(ii')$ of Proposition \ref{prop:key}, which says that
$|X_{i, c}| \le 4 (8 \log n)^i n^{\alpha_i - \varepsilon}$, we have
\begin{align} \label{eq:doublecount1}
|W_{c}| < 4 k \log n |X_{i, c}| n^{\delta_i} \le 2 k (8 \log n)^{i+1} n^{\alpha_{i+1} - \varepsilon} \le 2 k (8 \log n)^k n^{ \alpha_{i+1} - \varepsilon}.
\end{align}

Second, since there is no rainbow cycle of length at most $2k$
between $X_{i,c}$ and $W_{c}$, by the inductive hypothesis we have
\[ \frac{d}{k} |W_{c}| < e(W_{c}, X_{i,c}) < C \left[ |W_{c}| + |X_{i, c}| \right]^{ 1 + \varepsilon} < C \left[ 2 |W_{c} | \right]^{ 1 + \varepsilon}, \]
which gives $d < 2^{1+ \varepsilon} Ck |W_{c}|^{\varepsilon}$.
Hence we have
\begin{align} \label{eq:doublecount2}
\frac{C}{8 k \log n} n^{\varepsilon - \delta_i} \le d < 2^{1 + \varepsilon} Ck |W_{c}|^{\varepsilon}.
\end{align}
Combining the inequalities \eqref{eq:doublecount1} and \eqref{eq:doublecount2}, we get
\begin{eqnarray*}
 n^{\varepsilon - \delta_i} &<& 2^{4+\varepsilon} k^2 \log n |W_{c}|^\varepsilon < 2^{4 + \varepsilon} k^2 \log n \left( 2 k ( 8 \log n)^k n^{\alpha_{i+1} - \varepsilon} 
\right)^{\varepsilon} \\
&=& 2^{4 + (3k + 2) \varepsilon} k^{2 + \varepsilon} ( \log n)^{1 + k \varepsilon} n^{(\alpha_{i+1} - \varepsilon) \varepsilon}. 
\end{eqnarray*}
For our choice of $n$, we have $2^{4 + (3k + 2) \varepsilon} k^{2 + \varepsilon} ( \log n )^{1 + k \varepsilon} < n^{ \frac{1}{2} \varepsilon^2}$, and so $n^{\varepsilon - \delta_i} \le n^{\frac{1}{2} \varepsilon^2 + ( \alpha_{i+1} - \varepsilon) \varepsilon}$.  This gives $\varepsilon - \delta_i \le \frac{1}{2} \varepsilon^2 + (\alpha_{i+1} - \varepsilon) \varepsilon = \alpha_{i+1} \varepsilon - \frac{1}{2} \varepsilon^2$, which, using $\delta_i = \alpha_{i+1} - \alpha_i$, becomes
\[ \varepsilon - \alpha_{i+1} + \alpha_i \le \alpha_{i+1} \varepsilon - \frac{1}{2} \varepsilon^2. \]
Rearranging and adding $\left( 1 + \frac{\varepsilon}{2} \right)$ to both sides, we get
\[ ( 1 + \varepsilon) \left[ \left( 1 + \frac{\varepsilon}{2} \right) - \alpha_{i+1} \right] \le \left( 1 + \frac{\varepsilon}{2} \right) - \alpha_i, \]
which establishes Property 1.

\subsection{Property 2}

To obtain Property 2, we assume Condition $(ii)$ of Proposition \ref{prop:key} instead of $(ii')$.
We have shown that the next level $L_{i+1}$ is large enough.  For each of its vertices, we now need to select an edge back to $L_i$ in such a way that the sets $X_{i+1, c}$ formed satisfy the bound in Property 2.
For each $y \in L_{i+1}$, let $d_y = d(y, L_i)$.
Recall that there is a parameter $d$ such that
$d \ge \frac{C}{8k \log n} n^{\varepsilon - \delta_i}$ and $d \le d_y < 2d$ for all $y \in L_{i+1}$.
Also recall that each edge back to $L_i$ extends to a rainbow path to the root $v_0$ in the tree $T$.  For each vertex, we choose one edge uniformly at random, and show that with positive probability the resulting sets $X_{i+1,c}$ are small enough.

We can represent $|X_{i+1, c}|$ as a sum of indicator variables:
\[ |X_{i+1,c}| = \sum_{y \in L_{i+1}} \mathbf{1}_{ \{y \in X_{i+1,c}\} } . \]
Since each vertex $y$ chooses its path independently of the others, the indicator random variables in
the summand are independent.  We would first
like to obtain an estimate on $\mu_c = \BBE \left[ |X_{i+1,c}| \right]$.

First consider those $c \in \mathcal{C}_{i+1}$.  $|X_{i+1,c}|$ counts the number of times the color $c$ is used between the levels $L_i$ and $L_{i+1}$.  Since the coloring is proper, there are at most $|L_i|$ such edges.  Since all the vertices of $L_{i+1}$ have degree at least $d$, each such edge is chosen with probability at most $\frac{1}{d}$.  Thus $\mu_c \le \frac{|L_i|}{d}$, and by our bound \eqref{eq:degree} on $d$,
\[ \mu_c \le \frac{|L_i|}{d} \le \frac{8 k (\log n ) n^{\alpha_i}}{C n^{ \varepsilon - \delta_i}} < (\log n) n^{\alpha_{i+1} - \varepsilon}. \]

Now we consider those $c \notin \mathcal{C}_{i+1}$.  Note that for $y \in L_{i+1}$, we have $y \in X_{i+1,c}$ only if we choose for $y$ an edge back to $X_{i,c}$.  Thus,
\[ \mu_c = \sum_y \frac{d(y,X_{i,c})}{d_y} \le \frac{1}{d} \sum_y d(y,X_{i,c}) = \frac{1}{d} e(L_{i+1}, X_{i,c}). \]
Since all the vertices in $L_i$ send at most $\frac{C}{2k} n^{\varepsilon}$ edges into $L_{i+1}$, the above is at most
\[ \mu_c \le \frac{C}{2 k d} |X_{i,c}| n^{\varepsilon} \le \frac{C (8 \log n)^i}{2 k d} n^{\alpha_i}. \]
Using \eqref{eq:degree}, this gives $\mu_c \le \frac{1}{2} ( 8 \log n)^{i+1} n^{\alpha_{i+1} - \varepsilon}$.  Thus for $t = (8 \log n)^{i+1} n^{\alpha_{i+1} - \varepsilon}$, we have $t \ge 2 \mu_c$ for all colors $c$.

\medskip

By Theorem \ref{thm:conc}, for every color $c$, we have
\[ \BFP \left( |X_{i+1,c}| \ge t \right) \le \mathrm{exp} \left( - \frac{t}{8} \right). \]
Recalling that $\alpha_{i+1} \ge \alpha_1 \ge \varepsilon$, and $i + 1 \ge 2$, we have $t = (8 \log n)^{i+1} n^{\alpha_{i+1} - \varepsilon} \ge 64 \log n \ge 32 \ln n$.   Hence $\BFP \left( |X_{i+1,c}| \ge t \right) \le \mathrm{exp} \left( - 4 \ln n \right) = n^{-4}.$  There are at most $n^2$ colors $c$, and so a union bound gives
\[ \BFP \left( \exists c : |X_{i+1,c}| \ge t \right) \le n^2 \cdot n^{-4} = n^{-2} < 1. \]
Thus there is a choice of edges such that Property 2 holds.

\medskip

This completes the proof of Proposition \ref{prop:key}.
\end{proof}

\section{Concluding Remarks}

In this final section, we make a few remarks about our proof, and
present a couple of open problems.

\medskip

First, we note that at the beginning of our argument, we used the
Lemma \ref{lem:split} to separate the colors into disjoint classes
to be used between levels of the tree $T$.  This simplifies the
proof, at the cost of a worse constant $C(\varepsilon)$.  It is
possible to remove this step from the proof, and use most of the
edges out of a vertex at each stage.  While we would not gain much
in our argument above, this might be important if dealing with
cycles of length growing with $n$.

\medskip

Second, we noted earlier that we can strengthen our argument to
obtain rainbow cycles of length exactly $2 k$, as opposed to at most
$2 k$.  The only change that needs to be made is when establishing
Property 1 in the proof of Proposition \ref{prop:key}.  When trying
to show that every vertex in $y \in L_{i+1}$ sends a large
proportion of its edges to some set $X_{i,c}$, we first construct a
rainbow path $P_0$ of length $2( k - i - 1)$ from $y$ to some other
$y' \in L_{i+1}$, using the edges between $L_i$ and $L_{i+1}$.  Then
fix any path $P'$ from $y'$ to $v_0$ that is disjoint from $P_0$.
Note that if $y$ had a path to $v_0$ that was disjoint from $P \cup
P'$ and used a disjoint set of colors, we would have a rainbow cycle
of length $2k$.  Thus most paths from $y$ to $v_0$ must all use some
color from $P'$, which gives the desired result as before.  This
argument requires that $d$ is large relative to $k$, but if this
were not true then we would already have the desired expansion.

\medskip

Recall that $f(n)$ denotes the maximum number of edges in a rainbow
acyclic graph on $n$ vertices.  In this paper, we showed that for
any fixed $\varepsilon > 0$ and large enough $n$, $f(n) < n^{1 +
\varepsilon}$. In fact, one can use our method to obtain an upper
bound of the form $f(n) < n \; \mathrm{exp} \left( ( \log
n)^{\frac{1}{2} + \eta} \right)$ for any $\eta > 0$.  On the other
hand, the hypercube construction of Keevash, Mubayi, Sudakov and
Verstra\"{e}te gives a lower bound of $f(n) = \Omega( n \log n)$. It
would be very interesting to determine the true asymptotics of
$f(n)$. The problem of determining the rainbow Tur\'{a}n number for
even cycles also remains. It would be interesting to further narrow
the gap $\Omega \left( n^{ 1 + \frac{1}{k} } \right) \le
\mathrm{ex}^{\ast} ( n, C_{2k} ) \le O \left( n^{ 1 + \frac{(1 +
\varepsilon_k) \ln k}{k} } \right)$, and establish the order of
magnitude of the function. We believe the lower bound to be correct.

\subsection*{Acknowledgement}

We would like to thank Jan Volec for pointing out an error in our
earlier version of Proposition 3.1.

\end{document}